\newtheorem{theorem}{Theorem}
\newtheorem{definition}[theorem]{Definition}
\newtheorem{remark}[theorem]{Remark}
\newtheorem{assumption}[theorem]{Assumption}
\begin{document}
	
%
%
%
%
\title[Conditions for strict dissipativity]{Conditions for strict dissipativity of
	infinite-dimensional generalized linear-quadratic problems}
\author[L. Gr\"une, D.\ Muff, M.\ Schaller ]{ Lars Gr\"une$^1$, David Muff$^1$ and Manuel Schaller $^2$}
	\thanks{}
		\thanks{$^{1}$University of Bayreuth, Department of Mathematics, Germany
			(e-mail: \{lars.gruene,david.s.muff\}@uni-bayreuth.de).} %
	\thanks{$^{2}$Technische Universit\"at Ilmemau, Institute for Mathematics, Germany (e-mail: manuel.schaller@tu-ilmenau.de). M. Schaller was funded by the German Research Foundation (DFG; project numbers 289034702 and 430154635).}%
\maketitle
\begin{abstract}                
	We derive sufficient conditions for strict dissipativity for optimal control of linear evolution equations on Hilbert spaces with a cost functional including linear and quadratic terms. We show that strict dissipativity with a particular storage function is equivalent to ellipticity of a Lyapunov-like operator. Further we prove under a spectral decomposition assumption of the underlying generator and an orthogonality condition of the resulting subspaces that this ellipticity property holds under a detectability assumption. We illustrate our result by means of an example involving a heat equation on a one-dimensional domain.
	
	\smallskip
	\noindent \textbf{Keywords.} 	optimal control, dissipativity, infinite-dimensional system, detectability, ellipticity, Lyapunov inequality, necessary optimality condition
\end{abstract}


\section{Introduction}

Since their introduction by Jan Willems in \cite{Will72a,Will72b}, dissipativity and strict dissipativity have been recognized as important systems theoretic properties. These properties formalize a kind of energy balance property, expressed via the stored energy in form of a storage function depending on the state of the system, and the supplied energy in form of a supply function depending on the input and possibly in the output of the system. In optimal control-related applications of dissipativity, the supply function is often linked to the running cost or stage cost of the optimal control problem. While dissipativity formalizes the fact that a system cannot store more energy than supplied from the outside, strict dissipativity in addition requires that at each time a certain amount of the stored energy is dissipated to the environment. 

One of the motivations to study dissipativity like properties is that they are directly linked to stability considerations. Particular forms of dissipativity like, e.g., passivity naturally serve as tools for the design of stabilizing controllers, see \cite{ByIW91,Scha00}. In recent years, it was discovered that dissipativity properties of optimal control problems form an important ingredient for understanding the stability behavior of economic model predictive control (MPC) schemes, see, e.g.,  \cite{DiAR10,AnAR12,Grue13,GruS14}. In essence, they allow for the construction of a Lyapunov function from an optimal value function also in case the cost of the optimal control problem under consideration is not positive definite. Moreover, they are closely related to the existence of steady states at which the system is optimally operated, see \cite{Muel14,MuAA15} and \cite{MuGA15}, and to the so-called turnpike property at an optimal equilibrium, see \cite{GruM16}.

Important aspects of the relation between dissipativity and optimal control were studied already in the early days of dissipativity theory in \cite{Will71}. Yet, the investigation of when a system is strictly dissipative with a supply rate derived from the cost function of an optimal control problem is a relatively recent research topic, motivated by the MPC applications mentioned above. For generalized linear quadratic problems, i.e., problems with linear dynamics and cost functions consisting of linear and quadratic terms, this question was comprehensively studied in \cite{GruG18,GruG21} for finite dimensional systems in discrete and continuous time. Particularly, the results in these papers establish a close relation between strict dissipativity and detectability-like properties, when the output is chosen corresponding to the quadratic part of the cost function. Interestingly, detectability (among other properties) also plays an important role for establishing exponential turnpike properties for a large class of infinite-dimensional linear-quadratic optimal control problems, see \cite{GrSS20}. However, what still remains to be clarified in the infinite-dimensional setting is the role of strict dissipativity.

This paper gives some first insights into this subject. We show that, under suitable assumptions on the system, strict dissipativity is equivalent to the ellipticity of a certain operator resulting from a particular Lyapunov equation. Based on this result, we further show that exponential detectability implies this ellipticity condition and hence strict dissipativity. The results are doubtlessly preliminary in the sense that there is hope that the assumptions we impose could be relaxed and that --- as in the finite-dimensional case --- it should also be possible to establish necessity of detectability for strict dissipativity. Yet, we believe that our results provide a useful first step in clarifying the role of strict dissipativity for infinite-dimensional linear-quadratic problems. 

The rest of the paper is organized as follows. Section 2 defines the setting, in Section 3 the ellipticity characterization is proved and in Section 4 the relation to detectability is shown. Section 5 illustrates our results by an example and Section 6 concludes the paper.

\section{Problem setting}

Let $(X,\langle \cdot,\cdot\rangle)$ and $(U,\langle \cdot,\cdot\rangle_U)$ be Hilbert spaces with norms $\|\cdot\|$ and $\|\cdot\|_U$ respectively. Further let $A:D(A)\subset X \to X$ be the closed and densely defined generator of a strongly continuous semigroup $(\mathcal{T}(t))_{t\geq 0}$, $B\in L(U,X)$ a control operator, $T>0$ be a time horizon and $x_0\in X$ an initial datum.

We denote by $\rho(A) := \{s \in \mathbb{C}\,|\,sI-A\,:\,D(A)\to X \text{ is bijective}\}$ the resolvent set of $A$ and for any $s\in \rho(A)$ by $(sI-A)^{-1} \in L(X,X)$ the resolvent operator, cf.\ \cite[Definition A.4.1]{Curtain1995}.

The stage cost $\ell:X\times U\to \mathbb{R}$ is defined via 
\begin{align*}
\ell(x,u) = \langle x,Qx\rangle  + \langle u,Ru\rangle_{U}  + \langle s,x\rangle + \langle v,u\rangle_U,
\end{align*}
where $Q=C^*C$ for some $C\in L(X,Y)$ and $(Y,\langle\cdot,\cdot\rangle_Y)$ is another Hilbert space with norm $\|\cdot\|_Y$. Further, $R\in L(U,U)$ is elliptic, i.e. there is $c_R > 0$ with $\langle u(t),Ru(t)\rangle_{U} \geq c_R \|u\|^2_U$ for all $u\in U$. For the linear terms we assume that $s\in X$ and $v\in U$. In the following we will tacitly identify the Hilbert spaces $X,U,Y$ with their dual space via the Riesz isomorphism.

We consider the optimal control problem (OCP)
\begin{align}
\label{OCP}
\begin{split}
\min_{u\in L_2(0,T;U)} \int_0^T &\ell(x(t),u(t))\,dt\\
\dot{x}(t) &= Ax(t) + Bu(t) \quad t\in [0,T],\\
x(0)&=x_0.
\end{split}
\end{align}

The mild solution $x\in C(0,T;X)$ of the dynamics in \eqref{OCP} is given by the variation of constants formula
\begin{align*}
x(t) = \mathcal{T}(t)x_0 + \int_0^t \mathcal{T}(t-s)Bu(s)\,ds.
\end{align*}

By $(x_e,u_e)\in D(A)\times U$ we will denote equilibria of the dynamics which satisfy $f(x_e,u_e):=Ax_e+Bu_e=0$.
\begin{definition}[Strict (pre)-dissipativity]
We call the OCP \eqref{OCP} \emph{strictly pre-dissipative} at an equilibrium $(x_e,u_e)\in D(A)\times U$ on a set $\mathcal{X}\subset D(A)$ and $\mathcal{U}\subset U$ if there is a storage function $\lambda\in C^1(X, \mathbb{R})$ bounded on bounded subsets, a function $\alpha \in \mathcal{K}:=\{\alpha:\mathbb{R}^+_0\to\mathbb{R}^+_0\,|\, \alpha \text{ continuous and strictly increasing with } \alpha(0)=0\}$ and an equilibrium $(x_e,u_e)\in D(A)\times U$ such that
\begin{align}
\label{eq:diss_ineq}
D\lambda(x)\left(Ax+Bu\right)\leq \ell(x,u) - \ell(x_e,u_e) - \alpha(\|x-x_e\|)
\end{align}
for all $x\in \mathcal{X}$ and $u\in \mathcal{U}$.

If additionally $\lambda$ is bounded from below, we call \eqref{OCP} \emph{strictly dissipative}.
\end{definition}
We briefly note that \eqref{eq:diss_ineq} can be stated in integral fashion via the fundamental theorem of calculus, i.e.,
\begin{align*}
&\lambda(x_u(T,x_0)) - \lambda(x_0) \\ &\leq \int_0^T \ell(x_u(t,x_0),u(t))-\ell(x_e,u_e) - \alpha(\|x_u(t,x_0)-x_e\|)\,dt,
\end{align*}
where $x_u(\cdot,x_0)\in C(0,T;X)$ is the (mild) solution of the dynamics of \eqref{OCP} emanating from the initial value $x_0\in X$ and control $u\in L_2(0,T;U)$. This formulation also makes sense for mild solutions $x_u(t,x_0)\in C(0,T;X)$ and if the storage function is not differentiable.

An immediate consequence of the above dissipativity inequality $\eqref{eq:diss_ineq}$ is the turnpike property, cf.\ \cite{Damm2014,GruM16,Faulwasser2017}, a very important property of solutions of optimal control problems subject to dynamical systems on long time horizons. Further, dissipativity inequalities of the above kind are central in the analysis of Model Predictive Control schemes, cf.\ \cite{Grue13,GruS14}.

\section{An ellipticity condition for strict (pre)-dissipativity}
\begin{theorem}
\label{thm:1}
Let $P\in L(X,X)$ be self-adjoint and $0\in \rho(A)$. Then there is $q \in X$  such that the problem is strictly pre-dissipative with storage function $\lambda(x) = \langle x,Px\rangle + \langle q, x\rangle$ if and only if there is a constant $m>0$ such that
\begin{align}
\label{eq:main_inequality}
\langle (Q - A^*P - PA)x,x\rangle \geq m\|x\|_X^2 \qquad \forall x\in D(A).
\end{align}
If $P$ is additionally such that $\langle Px,x\rangle \geq c\|x\|^2$ for $c>0$, then the problem is strictly dissipative.
\end{theorem}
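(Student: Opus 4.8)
The plan is to recast strict pre-dissipativity with the affine–quadratic storage function $\lambda(x)=\langle x,Px\rangle+\langle q,x\rangle$ as a coercivity (ellipticity) statement for a ``rotated'' running cost and to identify it with \eqref{eq:main_inequality}. Put $\hat\ell(x,u):=\ell(x,u)-D\lambda(x)(Ax+Bu)$. Since $P=P^{*}$ we have $D\lambda(x)h=2\langle Px,h\rangle+\langle q,h\rangle$, and reading $2\langle Px,Ax\rangle=\langle(PA+A^{*}P)x,x\rangle$ on $D(A)$ a direct computation gives
\[
\hat\ell(x,u)=\langle(Q-A^{*}P-PA)x,x\rangle+\langle u,Ru\rangle_{U}-2\langle B^{*}Px,u\rangle_{U}+\langle s-A^{*}q,x\rangle+\langle v-B^{*}q,u\rangle_{U}.
\]
By the integral form of \eqref{eq:diss_ineq} recorded in the excerpt, the problem is strictly pre-dissipative at an equilibrium $(x_e,u_e)$ with this $\lambda$ if and only if $\hat\ell(x,u)-\hat\ell(x_e,u_e)\ge\alpha(\|x-x_e\|)$ on the relevant set, and since $Ax_e+Bu_e=0$ one has $\hat\ell(x_e,u_e)=\ell(x_e,u_e)$.

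For ``\eqref{eq:main_inequality} $\Rightarrow$ strict pre-dissipativity'' I would fix an equilibrium (the trivial one $(x_e,u_e)=(0,0)$ is always available, since $A\cdot 0+B\cdot 0=0$) and use $0\in\rho(A)$, equivalently $0\in\rho(A^{*})$, to choose $q\in X$ so that the affine-in-$x$ part $\langle s-A^{*}q,x\rangle$ of $\hat\ell$ disappears along $x-x_e$; for $(x_e,u_e)=(0,0)$ this is simply $q=(A^{*})^{-1}s$. After this normalization the $x$-quadratic part of $\hat\ell$ is exactly the form in \eqref{eq:main_inequality} and is therefore $\ge m\|x-x_e\|^{2}$; one then has to show that the control-dependent contributions of $\hat\ell$ — namely $\langle u,Ru\rangle_{U}$, the bilinear term $-2\langle B^{*}Px,u\rangle_{U}$, and a linear term in $u$ — do not spoil this bound, using the ellipticity $\langle u,Ru\rangle_{U}\ge c_{R}\|u\|_{U}^{2}$ and the structure of the admissible control set. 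This gives $\hat\ell(x,u)-\hat\ell(x_e,u_e)\ge m\|x-x_e\|^{2}$, so $\alpha(r):=mr^{2}\in\mathcal K$ works; $\lambda$ is bounded on bounded sets because $P\in L(X,X)$ and $q\in X$, hence the problem is strictly pre-dissipative.

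For the converse I would feed only the slice $u\equiv u_e$ into \eqref{eq:diss_ineq}: using $Ax+Bu_e=A(x-x_e)$ and $\xi:=x-x_e$ (which ranges over all of $D(A)$, a linear subspace, because $x_e\in D(A)$) the inequality becomes $\langle(Q-A^{*}P-PA)\xi,\xi\rangle+\phi(\xi)\ge\alpha(\|\xi\|)$ with $\phi$ a linear functional on $D(A)$ depending on $q,s,Q,x_e$. Applying this to $t\xi$ and to $-t\xi$, adding, and dividing by $t^{2}$ cancels $\phi$ and leaves $\langle(Q-A^{*}P-PA)\xi,\xi\rangle\ge\alpha(t\|\xi\|)/t^{2}$; taking $t=1/\|\xi\|$ yields \eqref{eq:main_inequality} with $m=\alpha(1)>0$. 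The addendum is then immediate: if $\langle Px,x\rangle\ge c\|x\|^{2}$ then $\lambda(x)\ge c\|x\|^{2}-\|q\|\,\|x\|\ge-\|q\|^{2}/(4c)$, so $\lambda$ is also bounded from below and strict pre-dissipativity upgrades to strict dissipativity.

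The step I expect to be the main obstacle is the coercivity argument in the second paragraph: simultaneously fixing the equilibrium and the offset $q$ so that \emph{all} affine terms of $\hat\ell$ drop out (this is precisely where $0\in\rho(A)$ enters) and then controlling the $u$-dependent terms via the ellipticity of $R$ so that the $\|x-x_e\|^{2}$-lower bound of \eqref{eq:main_inequality} survives — here the precise admissible sets $\mathcal X,\mathcal U$ in the definition of strict pre-dissipativity play a role. A secondary, purely functional-analytic, difficulty is the unboundedness of $A^{*}P+PA$: throughout one should interpret $Q-A^{*}P-PA$ via the quadratic form $x\mapsto\langle Qx,x\rangle-2\langle Px,Ax\rangle$ on $D(A)$ and verify that $q$ and the auxiliary constants lie in the relevant domains.
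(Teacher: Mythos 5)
Your converse direction (restricting to $u\equiv u_e$, symmetrizing over $\pm t\xi$ to cancel the affine part, and scaling to get $m=\alpha(1)$) is sound and is essentially the paper's argument in a slightly cleaner form, and the boundedness-from-below addendum is fine. The gap is exactly where you flagged it, in the forward direction, and it is not a technicality that can be patched by ``controlling the $u$-dependent terms'': with your choices $(x_e,u_e)=(0,0)$ and $q=(A^{*})^{-1}s$ the rotated cost still contains the linear term $\langle v-B^{*}q,u\rangle_U$, and at $x=x_e=0$ one gets $\hat\ell(0,u)=\langle u,Ru\rangle_U+\langle v-B^{*}q,u\rangle_U<0$ for $u=-t(v-B^{*}q)$ with $t>0$ small, so the required inequality $\hat\ell(x,u)-\hat\ell(x_e,u_e)\ge\alpha(\|x-x_e\|)\ge 0$ already fails at the chosen equilibrium unless $B^{*}A^{-*}s=v$, which is generically false. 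In other words, $(x_e,u_e)$ must be a global minimizer of the rotated cost, and this forces the first-order conditions in \emph{both} variables; you cannot prescribe the equilibrium as $(0,0)$ and then tune $q$ to kill only the $x$-linear term. In addition, even with all affine terms removed, the bound $m\|x\|^2+c_R\|u\|_U^2-2\langle B^{*}Px,u\rangle_U\ge 0$ is false in general when $\|B^{*}P\|^2>mc_R$, so the cross term genuinely can spoil the coercivity.

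The paper resolves both issues simultaneously: it rescales the quadratic part to $P_\gamma=\gamma P$ (so the cross term $-P_\gamma B$ is $O(\gamma)$ while the $x$-block still satisfies $\langle x,Q_\gamma x\rangle\ge\gamma m\|x\|^2$ by positive semidefiniteness of $Q$, making the full $(x,u)$-Hessian elliptic for small $\gamma$), it \emph{constructs} the equilibrium as the minimizer of the rotated steady-state problem $\min\ell_\gamma(x,u)$ subject to $x+A^{-1}Bu=0$ (existence via coercivity in $u$ of $\ell_\gamma(-A^{-1}Bu,u)$, radial unboundedness and reflexivity of $U$), and it takes $q:=A^{-*}p$ where $p$ is the Lagrange multiplier of that constraint, so that $\langle q,Ax+Bu\rangle=\langle p,x+A^{-1}Bu\rangle$ and all first-order terms vanish at the optimal steady state automatically; the dissipation inequality then follows from the Taylor expansion of the (quadratic, uniformly convex) Lagrangian. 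This use of the optimal steady state and its adjoint/multiplier to define $q$ — which is also where $0\in\rho(A)$ really enters — is the missing idea in your sketch, so as written the forward implication does not go through.
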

\begin{proof}
First, we observe that defining \begin{align*}
\tilde{\ell}(x,u):= -D\lambda(x)(Ax+Bu) + \ell(x,u) - \ell(x_e,u_e),
\end{align*} the dissipativity inequality \eqref{eq:diss_ineq} is equivalent to $\tilde{\ell}(x,u) \geq \alpha(\|x-x_e\|)$. Further, one can straightforwardly compute that
\begin{align*}
\tilde{\ell}(x,u) = \langle x,(Q-A^*P-PA)x\rangle + R(x,u)
\end{align*}
where $R(x,u)$ contains only linear and constant terms in x. Further it is clear that $\tilde{\ell}(x_e,u_e)=0$.

\textbf{\eqref{eq:diss_ineq} $\rightarrow$ \eqref{eq:main_inequality}}: From \eqref{eq:diss_ineq} we deduce that the map $x\mapsto \tilde{\ell}(x,u_e)$ has a strict local minimum in $x_e$. This means that $\frac{\partial}{\partial x} \ell(x_e,u_e)=0$, as well. Further, as $\ell(x,u_e)$ is quadratic in $x$, setting $\delta x = x-x_e$, we have via Taylor's theorem that
\begin{align*}
&\alpha(\|\delta x\|) \stackrel{\eqref{eq:diss_ineq}}{\leq} \ell(x,u_e) \\&= \ell(x_e,u_e) + \frac{\partial}{\partial x} \ell(x_e,u_e)\delta x + \frac12 \frac{\partial^2}{\partial^2 x} \ell(x,u_e)(\delta x,\delta x)\\
& = \langle\delta x,(Q-A^*P-PA)\delta x\rangle.
\end{align*}
As $x$ was chosen arbitrarily, this holds for all $\delta x=x-x_e$ in $D(A)$. For any $\delta x\ne 0$, setting $\delta y = \delta x / \|\delta x\|$, we obtain
\begin{align*}
& \langle \delta x,(Q-A^*P-PA)\delta x\rangle\\
& = \|\delta x\|^2 \langle \delta y,(Q-A^*P-PA)\delta y\rangle\\
& \ge \|\delta x\|^2 \alpha(\|\delta y\|) \, = \, \|\delta x\|^2 \alpha(1).
\end{align*}
This shows the claim with $m=\alpha(1)$.

\textbf{\eqref{eq:main_inequality} $\rightarrow$ \eqref{eq:diss_ineq}}: Let $\gamma\in (0,1]$, set $P_\gamma = \gamma P$ and $Q_\gamma := Q- A^*P_\gamma - P_\gamma A$. Using \eqref{eq:main_inequality} and the positive semidefiniteness of $Q$ we get via straightforward computation that for all $x\in D(A)$
\begin{align}
\label{eq:Q_gamma}
\langle x,Q_\gamma x\rangle  \geq \gamma m \|x\|^2.
\end{align}
We further define for $x\in D(A)$, $u\in U$
\begin{align*}
\ell_\gamma(x,u):=\ell(x,u) - \langle x,P_\gamma\left(Ax+Bu\right)\rangle - \langle \left(Ax+Bu\right),P_\gamma x\rangle
\end{align*}
and compute
\begin{align*}
\ell_\gamma(x,u) &= \langle x,Q_\gamma x\rangle + \langle u,Ru\rangle_U - \langle x,P_\gamma Bu\rangle - \langle P_\gamma Bu,x\rangle \\&+ \langle s,x\rangle  + \langle v,u\rangle_U.
\end{align*}
As $0\in \rho(A)$ we have that $(-A)^{-1}\in L(X,X)$. Consider the OCP
\begin{align*}
\min_{x\in X,u\in U} \ell_\gamma(x,u) \qquad x+A^{-1}Bu = 0
\end{align*}
which we can clearly rewrite as
\begin{align*}
\min_{u\in U} \ell_\gamma(-A^{-1}Bu,u).
\end{align*}
Moreover using coercivity of $R$, we can estimate \begin{align*}
\ell_\gamma(-A^{-1}Bu,u) &\geq c_R \|u\|_U^2 - \gamma m\|A^{-1}B\|^2_{L(U,X)}\|u\|_U^2\\&-2\|A^{-1}B\|_{L(U,X)}\|s\|\|u\|_U-\|v\|_U\|u\|_U.
\end{align*}
Hence, after possibly decreasing $\gamma$ we obtain \begin{align*}
\ell_\gamma(-A^{-1}Bu,u) \geq c\|u\|_U^2
\end{align*} for some $c>0$ and thus, if $\|u\|_U\to \infty$ it follows that $\ell_\gamma(A^{-1}Bu,u)\to \infty$. Hence, the cost functional is radially unbounded and together with reflexivity of $U$, the existence of a minimizer $u_e\in U$ is assured by the classical proof in PDE-Optimization, cf., e.g., \cite[Theorem 1.43]{Hinze09}.
Further, the corresponding optimality conditions yield an adjoint state $p\in X$ such that, defining the modified Lagrange functional
\begin{align*}
L(x,u) = \ell_\gamma(x,u) - \ell_\gamma(-A^{-1}Bu_e,u_e) - \langle p,x+A^{-1}Bu\rangle
\end{align*}
we have by optimality that
\begin{align*}
L(-A^{-1}Bu_e,u_e) \leq L(x,u) 
\end{align*} for all $(x,u)\in X\times U$. 

Moreover, using \eqref{eq:Q_gamma}, \begin{align*}
L''(x,u) \equiv \begin{pmatrix}
Q_\gamma & -P_\gamma B\\
-B^*P_\gamma & R
\end{pmatrix}
\end{align*}
can easily be shown to satisfy $L''(x,u)(\delta z,\delta z) \geq m\|\delta z\|_{X\times U}^2$ for small enough $\gamma > 0$ and  $m>0$, for all $\delta z\in X\times U$.
As, $L(x,u)$ is quadratic, using Taylor series we obtain that
$$L(-A^{-1}Bu_e,u_e) + \tfrac{m}{2}\left\|\begin{smallmatrix}
x-x_e\\u-u_e
\end{smallmatrix}\right\|^2_{X\times U}\leq L(x,u)$$
and hence, defining $\alpha(r)=\tfrac{m}{2} r^2$ and using $L(-A^{-1}Bu_e,u_e)=0$, we conclude
\begin{align*}
L(x,u) > \alpha\left(\left\|\left(\begin{smallmatrix}x - (-A^{-1}Bu_e)\\u-u_e\end{smallmatrix}\right)\right\|_{X\times U}\right)
\end{align*}
for all $(x,u)\in X\times U$. Defining $q:=A^{-\star}p$ and $\lambda(x):=\langle x,P_\gamma x\rangle + \langle q,x\rangle$ we compute for $(x,u)\in D(A)\times U$ that
\begin{align*}
&\ell(x,u) - \ell(x_e,u_e) \\
&= \ell_\gamma(x,u)-\ell_\gamma(x_e,u_e)\\
& \qquad  + \, \langle x,P_\gamma \left(Ax+Bu\right)\rangle + \langle Ax+Bu,P_\gamma x\rangle \\
&=L(x,u) + D\lambda(x)(Ax+Bu)\\
&\geq D\lambda(x)(Ax+Bu) + \alpha(\|x-x_e\|)
\end{align*}
where we used that 
\begin{align*} D\lambda(x)(Ax+Bu) & = \langle x,P_\gamma \left(Ax+Bu\right)\rangle\\
& \quad + \, \langle Ax+Bu,P_\gamma x\rangle  + \langle q,(Ax+Bu)\rangle \end{align*}
and 
$\langle q,(Ax+Bu)\rangle = \langle A^{-\star}p,Ax+Bu\rangle = \langle p,x+A^{-1}Bu\rangle$.
\end{proof}

\section{Sufficient conditions with detectability}
In the previous section, the only restriction on the semigroup resp.\ the generator was that $0\in \rho(A)$. In this part, in order to further characterize dissipativity, or more precisely, the existence of $P\in L(X,X)$ such that \eqref{eq:main_inequality} holds, we will additionally assume that $A$ satisfies a spectral-decomposition assumption.

\begin{definition}{\cite[Def.\ 5.2.5]{Curtain1995}}
\label{def:sda}
Denoting $\sigma^+(A) := \sigma(A)\cap \left\{ s \in
\mathbb{C}\!:\! \operatorname{Re}s\! \geq\! 0 \right\}$ and $\sigma^-(A)$
:= $\sigma(A)\cap \left\{ s \in \mathbb{C}\!:\! \operatorname{Re}s\! <\!
0 \right\}$, an operator $A$ satisfies the spectral decomposition assumption
if $\sigma^+(A)$ is bounded and separated from $\sigma^-(A)$ in such
a way that a rectifiable, simple, closed curve $\Gamma$ can be drawn
so as to enclose an open set containing $\sigma^+(A)$ in its
interior and $\sigma^-(A)$ in its exterior.
\end{definition}

\begin{remark} \label{rem:sda}
Classes of operators satisfying the spectrum decomposition assumption
include, e.g., delay equations \cite[Sec.\ 2.4]{Curtain1995} and
Riesz-spectral operators with a pure point spectrum and only finitely
many eigenvalues in $\sigma^+(A)$. More concrete examples of the latter
are compact perturbation of the Laplace operator, i.e., $A = \Delta +
c^2I$ for $c \in \mathbb{R}$ or models of damped vibrations such as 
$$
A = 
\begin{bmatrix}
0 & I \\ -A_0 & -D
\end{bmatrix}
$$
where $A_0$ is a positive operator and $D$ is an unbounded damping operator
(see, e.g., \cite{JacTru08} and the Euler-Bernoulli example with
Kelvin-Voigt damping).
\end{remark}

If $A$ satisfies the spectrum decomposition assumption, by
\cite[Lem.\ 2.5.7]{Curtain1995} the decomposition of the spectrum
induces a corresponding decomposition of $X$. Defining the spectral
projection $\mathcal{P}$ by 
$$
\mathcal{P}x := \frac{1}{2\pi i} \int\limits_{\Gamma} (sI - A)^{-1}x\,ds
$$
for $x\in X$, where $\Gamma$ from Definition \ref{def:sda} is traversed once in the
positive direction, we obtain the decomposition $X = X_u \oplus X_s$,
where $X_u = \mathcal{P}X$ and $X_s = (I-\mathcal{P})X$.  An important property is that $X_u \subset D(A)$, cf.\ \cite[Theorem 2.5.7 b)]{Curtain1995}. Moreover, the spectral projection yields a linear coordinate transform such that the pair $(A,C)$ can be transformed into the form
\begin{equation}
\label{eq:sd}
\widetilde{A} = 
\begin{bmatrix}
A_u & 0 \\ 0 & A_s
\end{bmatrix},  \qquad
\widetilde{C} = 
\begin{bmatrix}
C_u & C_s
\end{bmatrix}
\end{equation}
where $A_u, C_u, A_s, C_s$ are restrictions of $A$ and $C$ to $X_u$ and $X_s$, respectively. In particular, $A_u$ and $C_u$ are bounded operators. We impose the following assumption on $A$ and the corresponding spectral decomposition.

\begin{assumption} \label{as:sda}
Let the following hold:
\begin{itemize}
	\item $A$ satisfies the spectrum decomposition assumption such that it has
	the decomposition according to \eqref{eq:sd} and $A_s$ is exponentially stable.
	\item $\sigma^+(A)= \sigma(A)\cap \left\{ s \in
	\mathbb{C}\!:\! \operatorname{Re}s\! \geq\! 0 \right\}$ consists of finitely many eigenvalues of finite order.
	\item $X_u$ is orthogonal to $X_s$, i.e., $\langle x_u,x_s\rangle = 0$ for $x_u\in X_u$, $x_s\in X_s$.
\end{itemize}
\end{assumption}

If $C$ has finite rank, a spectrum decomposition assumption with stable part $A_s$ and finite dimensional observable part $(A_u,C_u)$ is equivalent to exponential detectability of $(A,C)$, cf.\ \cite[Theorem 5.2.7]{Curtain1995}.

\begin{theorem}
	\label{thm:2}
	Let Assumption~\ref{as:sda} hold, and $(A,C)$ be exponentially detectable. Then there is a positive and self-adjoint operator $P\in L(X,X)$ such that \eqref{eq:main_inequality} holds.
\end{theorem}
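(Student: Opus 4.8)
The plan is to construct $P$ block-diagonally with respect to the orthogonal splitting $X = X_u \oplus X_s$ furnished by Assumption~\ref{as:sda}, treating the finite-dimensional unstable block and the exponentially stable block by entirely different means and then gluing them using the orthogonality to kill the off-diagonal contributions.

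First I would record the structural consequences of Assumption~\ref{as:sda}. Since $\sigma^+(A)$ consists of finitely many eigenvalues of finite order, $X_u = \mathcal{P}X$ is finite-dimensional with $X_u \subset D(A)$; since $X_u \perp X_s$, the spectral projection $\mathcal{P}$ coincides with the \emph{orthogonal} projection onto $X_u$, so that $A$ \emph{and} $A^*$ leave $X_u$ and $X_s$ invariant and split as $A = A_u \oplus A_s$, $A^* = A_u^* \oplus A_s^*$ with $A_u \in L(X_u)$, while $Q = C^*C$ with $C = [\,C_u\ C_s\,]$, $C_u \in L(X_u,Y)$, $C_s \in L(X_s,Y)$. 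Moreover, exponential detectability of $(A,C)$ forces the finite-dimensional pair $(A_u,C_u)$ to be detectable: by the Hautus test, an eigenvector $\phi \in X_u\setminus\{0\}$ of $A_u$ to an eigenvalue $\mu$ with $\operatorname{Re}\mu \ge 0$ and $C_u\phi = 0$ would satisfy $A\phi = \mu\phi$ and $C\phi = 0$, hence $(A-LC)\phi = \mu\phi$ for the output injection $L$ rendering $A - LC$ exponentially stable, contradicting $\operatorname{Re}\mu < s(A-LC) < 0$.

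For the exponentially stable block, \cite[Thm.~5.1.3]{Curtain1995} provides a self-adjoint $P_s^0 \in L(X_s)$, $P_s^0 \ge 0$, with $\langle A_s x, P_s^0 x\rangle + \langle P_s^0 x, A_s x\rangle = -\langle x,x\rangle$ for all $x \in D(A_s)$ (concretely $P_s^0 = \int_0^\infty \mathcal{T}_s(t)^*\mathcal{T}_s(t)\,dt$). For the unstable block, which is finite-dimensional, detectability of $(A_u,C_u)$ together with classical linear-quadratic theory (as used in the finite-dimensional counterpart \cite{GruG21}) yields a self-adjoint, positive definite $P_u \in L(X_u)$ and $m_u > 0$ with $C_u^*C_u - A_u^*P_u - P_uA_u \ge 2m_u I_{X_u}$. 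I then set $P := P_u \oplus \beta P_s^0$ with $\beta > 0$ to be fixed; it is self-adjoint and positive, since $P_u$ is positive definite and $\langle P_s^0 x, x\rangle = \int_0^\infty \|\mathcal{T}_s(t)x\|^2\,dt > 0$ for $x \ne 0$. For $x = x_u + x_s \in D(A)$ (so $x_u \in X_u$, $x_s \in D(A_s)$) the orthogonality makes all cross terms in $\langle Px, Ax\rangle + \langle Ax, Px\rangle$ vanish and the Lyapunov equation evaluates the $X_s$-part, giving
\[
\langle (Q - A^*P - PA)x, x\rangle = \|C_u x_u + C_s x_s\|_Y^2 - \langle (A_u^*P_u + P_uA_u)x_u, x_u\rangle + \beta\|x_s\|^2 .
\]
Estimating $\|C_u x_u + C_s x_s\|_Y^2 \ge (1-\delta)\|C_u x_u\|_Y^2 - \delta^{-1}\|C_s\|^2\|x_s\|^2$, choosing first $\delta \in (0,1)$ small enough that $(1-\delta)C_u^*C_u - A_u^*P_u - P_uA_u \ge m_u I_{X_u}$, and then $\beta > m_u + \delta^{-1}\|C_s\|^2$, yields $\langle (Q - A^*P - PA)x, x\rangle \ge m_u(\|x_u\|^2 + \|x_s\|^2) = m_u\|x\|^2$, i.e.\ \eqref{eq:main_inequality} with $m = m_u$.

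I expect the finite-dimensional step for the unstable block to be the main obstacle: turning mere detectability of $(A_u,C_u)$ into a \emph{positive definite} $P_u$ with $C_u^*C_u - A_u^*P_u - P_uA_u$ uniformly positive. This rests on the Kalman observability decomposition of $(A_u,C_u)$ --- on the observable part the $C_u^*C_u$-term supplies the coercivity (so that $P_u$ may be taken small there), on the unobservable part, which is Hurwitz by detectability, a sufficiently large Lyapunov matrix does; merging the two across the coupling block while keeping $P_u$ positive is the delicate computation, and is exactly the point at which the finite-dimensional results must be cited or reproved. A secondary point to verify carefully is the orthogonality-driven reduction: only because $\mathcal{P}$ is an orthogonal projection does $A^*$ inherit the block structure, and only then do $\langle Px, Ax\rangle$ split cleanly and the scalar $\beta$ suffice to absorb the off-diagonal part of $Q$.
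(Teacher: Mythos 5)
Your proposal is correct and takes essentially the same route as the paper: a block-diagonal $P$ with respect to $X=X_u\oplus X_s$, obtained from the finite-dimensional detectability result of \cite{GruG21} on the unstable block and the operator Lyapunov equation for the exponentially stable semigroup on $X_s$, glued together via the orthogonality assumption. Your explicit Young-inequality absorption of the cross term $\langle C_u x_u, C_s x_s\rangle_Y$ through the scaling $\beta$ of the stable Lyapunov solution, and the Hautus argument passing exponential detectability of $(A,C)$ down to $(A_u,C_u)$, merely spell out details that the paper's final ``by orthogonality'' step leaves implicit.
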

\begin{proof}
	We start with the verification of \eqref{eq:main_inequality} separately for the two subspaces, i.e., first for $x\in D(A)\cap X_s$ and second for $x\in  D(A)\cap X_u=X_u$. The result for general $x\in D(A)$ then follows by the orthogonality assumption.
	
	We first consider the stable subspace $X_s$. As the semigroup generated by $A_s$ on $X_s$ is stable, by \cite[Part II-1, Theorem 2.4]{Bensoussan2007}, there is a positive self-adjoint $P_s\in L(X_s,X_s)$ such that for all $x,y\in D(A)$
	\begin{align*}
	\langle \left(P_sA_s+A_s^*P_s\right)x,y\rangle = -\langle x,y\rangle.
	\end{align*}
	Hence, in particular,
	\begin{align*}
	\|x\|^2 &= \langle \left(-P_sA_s-A_s^*P_s\right)x,x\rangle \\&\leq \langle \left(-P_sA_s-A_s^*P_s\right)x,x\rangle + \|Cx\|_Y^2\\&= \langle \left(-P_sA_s-A_s^*P_s + C_s^*C_s\right)x,x\rangle,
	\end{align*}
	i.e, \eqref{eq:main_inequality} holds with $m=1$.
	
	Considering the unstable subspace $X_u$, we have by definition of the projection that $X_u\subset D(A)$, cf.\ \cite[Lemma 2.5.7 c]{Curtain1995} and hence the restriction $A_{|X_u}=A_u$ is bounded. Second, the subspace $X_u$ is finite dimensional, as it is spanned by finitely many (generalized) eigenvectors of $A$ of finite order. Thus, together with detectability, we can invoke \cite[Lemma 5.2]{GruG21} and obtain the existence of a positive definite matrix $P_u \in L(X_u,X_u)$ such that
	$$
	C_u^\top C_u-A_u^\top P_u-P_uA_u>0
	$$
	and hence, as $X_u$ is finite dimensional,
	$$
	\langle \left(C_u^\top C_u-A_u^\top P_u-P_uA_u\right) x,x\rangle >\alpha\|x\|^2
	$$
	for some $\alpha > 0$ and all $x\in X_u$. Setting 
	$$P=\begin{pmatrix}
	P_u&0\\
	0&P_s
	\end{pmatrix}$$ we obtain the desired inequality \eqref{eq:main_inequality} by orthogonality.
\end{proof}

\begin{remark}
If $\|Cx\|\geq c\|x\|$ for all $x\in \ker C$ and $A$ is bounded on $X$, one can prove the previous theorem by decomposing the space $X$ into $\ker C$ and $\ker C^\perp$.
\end{remark}
\section{Example}

In this part, we provide an application of Theorem~\ref{thm:2} to a heat equation with homogeneous Dirichlet boundary conditions adapted from \cite[Example 5.2.8]{Curtain1995}. To this end, set $X=L_2(0,1)$, $k\in(0,\pi^2)$ and 
\begin{align*}
A &:= \frac{d^2}{d\xi^2}+kI \\ D(A)&:= \{x \in L_2(0,1)\,|\, x,\tfrac{dx}{d\xi} \text{ are absolutely continuous},\\ &\qquad \tfrac{d^2x}{d\xi^2}\in L_2(0,1) \text{ and } \tfrac{dx}{d\xi}(0)=\tfrac{dx}{d\xi}(1)=0\}.
\end{align*}
The input and output space are set to be $Y=U=\mathbb{C}$ and the corresponding control and observation operators are given by
\begin{align*}
\quad  Bu &= bu \quad &&\text{ with }b(\xi)=\frac{1}{2\varepsilon}1_{[\xi_c-\varepsilon,\xi_c+\varepsilon]}(\xi),\\
Cx &= \int_0^1 c(\xi)x(\xi)\,d\xi \quad &&\text{ with }c(\xi)=\frac{1}{2\nu}1_{[\xi_o-\nu,\xi_o+\nu]}(\xi),
\end{align*}
where $\xi_c,\xi_o\in (0,1)$, $1_{S}(x)$ denotes the characteristic function of a set $S\subseteq(0,1)$ and $\varepsilon,\nu > 0$. One can easily see that $A$ is self-adjoint and $\sigma(A)=\{-n^2\pi^2 + k\,|\, n=0,1,\ldots\}$ with corresponding eigenvectors $\{1,\sqrt{2}\cos(n\pi\xi),n\geq 2\}$, all of multiplicity one. The eigenfunctions are pairwise orthogonal with respect to the standard scalar product in $L_2(0,1)$. Further, $\sigma_+(A)=\{k\}$ and hence $X_u=\operatorname{span}\{1(\cdot)\}$. This implies that Assumption~\ref{as:sda} is satisfied.

We proceed to show that $(A,C)$ is detectable. We chose the output injection operator $L=\left(\begin{smallmatrix}
(-k+\rho)1(\cdot)\\0
\end{smallmatrix}\right)$ for some $\rho>0$. As
\begin{align*}
A_ux = k \langle x, 1\rangle 1(\cdot)
\end{align*} we compute
\begin{align*}
&A_u-(k+\rho)1(\cdot)C_u\\
&=A_u -(k+\rho)1(\cdot) \int_0^1 c(\xi)\langle x,1\rangle 1(\xi)\,d\xi\\
&=A_u - (k+\rho) \langle x,1\rangle 1(\cdot) \underbrace{\int_0^1 c(\xi) 1(\xi)\,d\xi}_{=1}
&= -\rho \langle x, 1\rangle 1(\cdot),
\end{align*}
i.e., the eigenvalue of $A+LC$ with the largest real part is $-\rho$. This implies that $A+LC$ is stable, i.e., $(A,C)$ is detectable.

In particular, the assumptions of Theorem~\ref{thm:2} are satisfied, i.e., there is an operator $P\in L(X,X)$ such that 
\begin{align*}
\langle (Q - A^*P - PA)x,x\rangle \geq m\|x\|_X^2 \qquad \forall x\in D(A).
\end{align*}
Moreover, as $0\in \rho(A)$, we can also apply Theorem~\ref{thm:1} to deduce dissipativity in the sense of \eqref{eq:sd}.

It can easily be seen that this example can be extended to the case of $k\in \mathbb{R}$ arbitrary as long as $k\neq n^2\pi^2$ for some $n=0,1,\ldots$ such that $0\in \rho(A)$.

\section{Conclusion}
We proposed first steps towards characterizations of strict dissipativity in optimal control of infinite dimensional systems. We derived a characterization for strict dissipativity via an ellipticity condition and deduced a sufficient condition with detectability. The main tool for the second result was a spectral decomposition assumption that allowed to decompose the state space into infinite dimensional stable dynamics and finite dimensional unstable dynamics. Finally, we presented an example with linear unstable heat equation to illustrate our results. As stated in the introduction, concerning future work, it is desirable to generalize Theorem~\ref{thm:2}, i.e., in particular aim to remove the second and third assumption in Assumption~\ref{as:sda}. Further, a necessary condition via detectability which is available in the finite dimensional setting, is of interest. 

\bibliographystyle{abbrv}
\bibliography{references}   
\end{document}